\definecolor{webgreen}{rgb}{0,.5,0}
\definecolor{webbrown}{rgb}{.6,0,0}
\newcommand{\seqnum}[1]{\href{https://oeis.org/#1}{\rm \underline{#1}}}
\theoremstyle{plain}
\newtheorem{theorem}{Theorem}
\theoremstyle{definition}
\newtheorem{conjecture}[theorem]{Conjecture}
\theoremstyle{remark}
\newtheorem{remark}[theorem]{Remark}
\begin{document}

\begin{center}
\vskip 1cm{\Large\bf
On Generalized Eigenvalues of MAX Matrices to MIN Matrices and of LCM Matrices
to GCD Matrices
}
\vskip 1cm

\begin{minipage}{3in}
\begin{center}
Jorma K. Merikoski\\
Faculty of Information Technology and Communication Sciences \\
FI-33014 Tampere University\\
Finland \\
\href{mailto:jorma.merikoski@tuni.fi}{\tt jorma.merikoski@tuni.fi}\\
\end{center}
\end{minipage}\ \quad \
\begin{minipage}{3in}
 \begin{center}
 Pentti Haukkanen\\
Faculty of Information Technology and Communication Sciences \\
FI-33014 Tampere University\\
Finland \\
\href{mailto:pentti.haukkanen@tuni.fi}{\tt pentti.haukkanen@tuni.fi}\\
 \end{center}
\end{minipage}
\vskip .15in

\begin{minipage}{3in}
\begin{center}
 Antonio Sasaki \\
Centre de Math\'ematiques Appliqu\'ees \\
\'Ecole nationale sup\'erieure des mines de Paris \\
Universit\'e Paris Sciences et Lettres\\
FR-06560 Valbonne \\
France \\
\href{mailto:antonio.sasaki@minesparis.psl.eu}{\tt antonio.sasaki@minesparis.psl.eu} \\
\mbox{}\\
\end{center}
\end{minipage}\ \quad \
\begin{minipage}{3in}
\begin{center}
Timo Tossavainen \\
Department of Health, Education and Technology \\
Lulea University of Technology \\
SE-97187 Luleå\\
Sweden \\
\href{mailto:timo.tossavainen@ltu.se}{\tt timo.tossavainen@ltu.se} \\
 \end{center}
\end{minipage}
\vskip .15in
\end{center}

\begin{abstract}
We determine, for any $n \geq 1$, the generalized eigenvalues of an $n \times n$ MAX matrix to the corresponding MIN matrix. We also show that a similar result holds for the generalized eigenvalues of an $n \times n$ LCM matrix to the corresponding GCD matrix when $n\le4$, but breaks down for $n>4$. In addition, we prove Cauchy's interlacing theorem for generalized eigenvalues, and we conjecture an unexpected connection between the OEIS sequence \seqnum{A004754} and the appearance of $-1$ as a generalized eigenvalue in the LCM--GCD setting.
\end{abstract}

\section{Introduction}

Let ${\bf A}$ and ${\bf B}$ be complex Hermitian $n\times n$ matrices, and let~${\bf B}$ be
positive definite. (That is, the conjugate transpose ${\bf A}^*=\bf A$, and
${\bf x}^*{\bf B}{\bf x}>0$ whenever ${\bf 0}\ne{\bf x}\in\mathbb{C}^n$.) The generalized eigenvalue equation of~$\bf A$ to~$\bf B$ is
\begin{equation}
\label{gevp}
{\bf Ax}=\lambda{\bf Bx},\quad{\bf 0}\ne{\bf x}\in\mathbb{C}^n.
\end{equation}
Then $\lambda$ is a generalized eigenvalue (``g-eigenvalue'' for short) of ${\bf A}$  to ${\bf B}$,
and ${\bf x}$ is a corresponding generalized eigenvector (``g-eigenvector'').
See, e.g., Ghojogh, Karray, and Crowley~\cite{GKC}.
They consider real symmetric matrices, but everything can be generalized to complex
Hermitian matrices.

It is actually enough that $\bf B$ is invertible in~(\ref{gevp}), and $\bf A$ can be arbitrary.
However, the above assumptions are usually stated. Then all g-eigenvalues are real,
and g-eigenvectors corresponding to distinct g-eigenvalues are orthogonal with respect
to the inner product $\langle{\bf x},{\bf y}\rangle={\bf y}^*\bf Bx$.

The standard eigenvalues (``s-eigenvalues'' for short) are widely studied. The g-eigenvalue equation~(\ref{gevp})
reduces to the s-eigenvalue equation
\[
{\bf B}^{-1}{\bf Ax}=\lambda{\bf x}.
\]
However, this ``quick and dirty solution''~\cite{GKC} does not have significant use.
So, g-eigenvalues must be considered in a different way. 
This area has not been studied much in the literature.

Let
\[
S=\{s_1,\dots,s_n\},\quad s_1<\cdots<s_n,
\]
be a set of positive real numbers.
The $n \times n$ MAX matrix~${\bf M}_S$ and MIN matrix~${\bf N}_S$ on~$S$ are defined by
\[
{\bf M}_S=(m^S_{ij}), \ m^S_{ij}=\max{(s_i,s_j)},\quad{\bf N}_S=(n^S_{ij}), \ n^S_{ij}=\min{(s_i,s_j)}.
\]
Then ${\bf N}_S$ is positive definite \cite[Theorem 8.1]{MH}. Also, let
\[
T=\{t_1,\dots,t_n\},\quad t_1<\cdots<t_n,
\]
be a set of positive integers. The $n \times n$ LCM matrix~${\bf L}_T$ and GCD
matrix~${\bf G}_T$ on~$T$ are defined by
\[
{\bf L}_T=(l^T_{ij}), \ l^T_{ij}=\mathrm{lcm}{(t_i,t_j)},\quad{\bf G}_T=(g^T_{ij}), \ g^T_{ij}=\gcd{(t_i,t_j)}.
\]
Also, ${\bf G}_T$ is positive definite~\cite[Theorem~2]{BL}.

We study g-eigenvalues of~${\bf M}_S$ to~${\bf N}_S$ in Section~\ref{max}, and those
of~${\bf L}_T$ to~${\bf G}_T$ in Sections \ref{lcm} and~\ref{lcmt}. Finally, we complete our
paper with discussion in Section~\ref{discussion}.

All g-eigenvalues of $\bf A$ to~$\bf A$ are trivially one.
We can therefore expect that
the g-eigenvalues of $\bf A$ to~$\bf B$ are also in certain nontrivial cases
more accessible than the s-eigenvalues of $\bf A$ and~$\bf B$. We will see this
in the case ${\bf A}={\bf M}_S$, ${\bf B}={\bf N}_S$. We will also see it in the case
${\bf A}={\bf L}_T$, ${\bf B}={\bf G}_T$, where $T=\{1,\dots,n\}$, $n\le 4$.

Recently, these matrices have been studied extensively (e.g., \cite{AFKT,FKT,HP,Lo,MH,ZWF}).
These works discuss not only new results in this field but also provide applications to various
other areas of mathematics. Applications in computing~\cite{KMR}, statistics~\cite{MH},
and signal processing~\cite{RRBA} have also been reported.

\section{MAX--MIN setting}
\label{max}

We want to evaluate the g-eigenvalues of ${\bf M}_S$ to ${\bf N}_S$, i.e., the
solutions~$\lambda$ to the equation $\det({\bf M}_S - \lambda {\bf N}_S) = 0$.

We begin with $n=2$. Let $S=\{a, b\}$, $0<a < b$. Then
\[
\det{({\bf M}_S-\lambda{\bf N}_S)}=\left|
\begin{array}{cc}
a-\lambda a&b-\lambda a
\\
b-\lambda a&b-\lambda b
\end{array}
\right|=a(b-a)\lambda^2-b(b-a)=0
\]
if and only if
\[
\lambda=\pm\sqrt{\frac{b}{a}}.
\]

Our aim is to prove Theorem \ref{mnthm} below. However, because the general proof is not 
easily readable, we show the details only in the case $n=4$. A careful reader will notice that we
can proceed similarly for any $n>2$. 

\begin{theorem}
\label{mnthm}
The g-eigenvalues of ${\bf M}_S$ to~${\bf N}_S$, $n>2$, are
\begin{equation}
\label{lambdas}
\lambda_1=\sqrt{\frac{s_n}{s_1}}, \ \lambda_2=\cdots=\lambda_{n-1}=-1, \ \lambda_n=-\sqrt{\frac{s_n}{s_1}}.
\end{equation}
\end{theorem}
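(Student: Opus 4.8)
The plan is to compute the polynomial $p(\lambda)=\det({\bf M}_S-\lambda{\bf N}_S)$ explicitly and then factor it. Since ${\bf N}_S$ is positive definite and hence invertible, the leading term of $p$ is $\det(-\lambda{\bf N}_S)=(-\lambda)^n\det{\bf N}_S$ with $\det{\bf N}_S\neq0$, so $p$ has degree exactly $n$ and its $n$ roots (with multiplicity) are precisely the g-eigenvalues. It therefore suffices to show that $p(\lambda)$ is a \emph{positive} constant times $\bigl(-(1+\lambda)\bigr)^{n-2}(\lambda^2 s_1-s_n)$, and then to read off the roots.

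First I would write the matrix ${\bf C}={\bf M}_S-\lambda{\bf N}_S$ entrywise. Using $s_1<\cdots<s_n$, its $(i,j)$ entry equals $s_j-\lambda s_i$ for $i<j$, equals $s_i-\lambda s_j$ for $i>j$, and equals $(1-\lambda)s_i$ on the diagonal. The key observation is that the nested structure of the MAX and MIN matrices collapses under differencing adjacent rows. Applying the determinant-preserving operations $R_i\to R_i-R_{i+1}$ for $i=1,\dots,n-1$, each such row $i$ becomes two-valued: it equals $-(s_{i+1}-s_i)$ in columns $1,\dots,i$ and $\lambda(s_{i+1}-s_i)$ in columns $i+1,\dots,n$, while the last row is untouched. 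Writing $e_i=s_{i+1}-s_i>0$ and factoring $e_i$ out of row $i$, I obtain $p(\lambda)=\bigl(\prod_{i=1}^{n-1}e_i\bigr)\det{\bf D}$, where rows $1,\dots,n-1$ of ${\bf D}$ carry only $-1$'s and $\lambda$'s.

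Next I would apply the column operations $C_j\to C_j-C_{j+1}$ for $j=1,\dots,n-1$. In rows $1,\dots,n-1$ these cancel everything but one entry, leaving $-(1+\lambda)$ on the diagonal and $\lambda$ in the final column; in the last row a second telescoping yields $\lambda e_j$ in column $j$ and $(1-\lambda)s_n$ in the corner. The outcome is an arrowhead matrix, whose determinant I would evaluate through the Schur-complement (equivalently adjugate) formula, using the telescoping identity $\sum_{i=1}^{n-1}e_i=s_n-s_1$. Substituting $-(1+\lambda)$ for the diagonal block collapses the bracketed term to $\lambda^2 s_1-s_n$, giving
\[
p(\lambda)=\Big(\prod_{i=1}^{n-1}e_i\Big)\,\bigl(-(1+\lambda)\bigr)^{n-2}\,(\lambda^2 s_1-s_n).
\]

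Finally, since $\prod e_i>0$, the roots of $p$ are $\lambda=-1$, of multiplicity $n-2$, together with $\lambda=\pm\sqrt{s_n/s_1}$; as $s_n>s_1$, we have $\pm\sqrt{s_n/s_1}\neq-1$, so these two are simple and the total count is $n$, matching~(\ref{lambdas}). I expect the main obstacle to be purely bookkeeping: checking that the two rounds of adjacent differencing produce exactly the claimed arrowhead pattern (especially the boundary entries at $j=i$, at $j=i+1$, and in the last row and column), and tracking that $-1$ occurs with multiplicity $n-2$ rather than $n-1$. The only genuine case distinction, the apparent division by $1+\lambda$ in the arrowhead step, is dissolved by using the adjugate form of the Schur complement, which makes $\det{\bf D}$ manifestly a polynomial in $\lambda$ with no exceptional value.
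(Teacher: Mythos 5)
Your argument is correct, but it takes a genuinely different route from the paper's. The paper splits the claim in two: it first observes that $({\bf M}_S+{\bf N}_S)_{ij}=s_i+s_j$, so that ${\bf M}_S+{\bf N}_S$ has rank $2$ and nullity $n-2$, which yields $-1$ as a g-eigenvalue of multiplicity $n-2$; it then verifies that $\det({\bf M}_S+\lambda{\bf N}_S)=0$ when $\lambda^2=s_n/s_1$ by the same adjacent-row differencing you use, followed by a multilinearity split of the first row into two visibly singular determinants --- and it writes out only the case $n=4$, asserting that general $n$ is analogous. You instead compute the full g-characteristic polynomial for arbitrary $n$. I checked your row differencing, the column differencing, and the arrowhead expansion: the claimed identity
$p(\lambda)=\bigl(\prod_{i=1}^{n-1}e_i\bigr)\bigl(-(1+\lambda)\bigr)^{n-2}(\lambda^2 s_1-s_n)$
is right, and its leading coefficient $(-1)^n s_1\prod_i e_i$ agrees with $(-\lambda)^n\det{\bf N}_S$, as it must. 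What your route buys is a single uniform computation valid for all $n$, with the multiplicity $n-2$ of $-1$ read off algebraically from the factorization rather than inferred from a separate rank count; the cost is heavier bookkeeping. The paper's route is lighter per step but leaves the general-$n$ case to the reader and needs the rank-$2$ observation as an independent ingredient. One point worth making explicit in a final write-up: the algebraic multiplicity of a root of $\det({\bf M}_S-\lambda{\bf N}_S)$ coincides with the geometric multiplicity of the corresponding g-eigenvalue because ${\bf N}_S$ is positive definite (conjugate to the symmetric eigenvalue problem for ${\bf N}_S^{-1/2}{\bf M}_S{\bf N}_S^{-1/2}$), so counting roots with multiplicity does deliver exactly the list (\ref{lambdas}).
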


\begin{proof}
 Let $S=\{a,b,c,d\}$, $0<a<b<c<d$. Then
\[
{\bf M}_S=
\left(
\begin{array}{cccc}
a&b&c&d
\\
b&b&c&d
\\
c&c&c&d
\\
d&d&d&d
\end{array}
\right),\quad
{\bf N}_S=
\left(
\begin{array}{cccc}
a&a&a&a
\\
a&b&b&b
\\
a&b&c&c
\\
a&b&c&d
\end{array}
\right).
\]
The matrix
\[
{\bf M}_S+{\bf N}_S=\left(
\begin{array}{cccc}
2a&a+b&a+c&a+d
\\
a+b&2b&b+c&b+d
\\
a+c&b+c&2c&c+d
\\
a+d&b+d&c+d&2d
\end{array}
\right)
\]
has rank~2 and nullity~2. Consequently,  $-1$ is a g-eigenvalue of~${\bf M}_S$
to~${\bf N}_S$ with multiplicity~2. We show that the remaining g-eigenvalues are
\[
\lambda=\pm\sqrt{\frac{d}{a}}.
\]
Regardless of the sign of $\lambda$, we have (note that $d=\lambda^2a$)
\[
\det{({\bf M}_S+\lambda{\bf N}_S)}=
\left|
\begin{array}{cccc}
a+\lambda a&b+\lambda a&c+\lambda a&d+\lambda a
\\
b+\lambda a&b+\lambda b&c+\lambda b&d+\lambda b
\\
c+\lambda a&c+\lambda b&c+\lambda c&d+\lambda c
\\
d+\lambda a&d+\lambda b&d+\lambda c&d+\lambda d
\end{array}
\right|
\]
\[
=
\left|
\begin{array}{cccc}
a+\lambda a&b+\lambda a&c+\lambda a&d+\lambda a
\\
b-a&\lambda(b-a)&\lambda(b-a)&\lambda(b-a)
\\
c-b&c-b&\lambda(c-b)&\lambda(c-b)
\\
d-c&d-c&d-c&\lambda(d-c)
\end{array}
\right|
=\left|
\begin{array}{cccc}
\lambda a&b&c&\lambda^2a
\\
b-a&\lambda(b-a)&\lambda(b-a)&\lambda(b-a)
\\
c-b&c-b&\lambda(c-b)&\lambda(c-b)
\\
d-c&d-c&d-c&\lambda(d-c)
\end{array}
\right|
\]
\[
+
\left|
\begin{array}{cccc}
a&\lambda a&\lambda a&\lambda a
\\
b-a&\lambda(b-a)&\lambda(b-a)&\lambda(b-a)
\\
c-b&c-b&\lambda(c-b)&\lambda(c-b)
\\
d-c&d-c&d-c&\lambda(d-c)
\end{array}
\right|=:D_1+D_2.
\]
Since $D_1=D_2=0$, the claim follows.
\end{proof}

\begin{remark}
Actually Theorem~$\ref{mnthm}$ holds also for $n=2$. Then the equation chain
$\lambda_2=\cdots=\lambda_{n-1}$ is ``empty''.
\end{remark}

\begin{remark}
\label{remark}
If the ordering of $s_1,\dots,s_n$ is arbitrary, then (\ref{lambdas}) reads
\begin{equation}
\label{lmbds}
\lambda_1=\max_{i,j}\sqrt{\frac{s_i}{s_j}}, \ \lambda_2=\cdots=\lambda_{n-1}=-1,
 \ \lambda_n=-\max_{i,j}\sqrt{\frac{s_i}{s_j}}.
\end{equation}
See also \cite[Remark~2.1]{MH}.
\end{remark}

\begin{remark}
Theorem~\ref{mnthm} applies also to the g-eigenvalues of~${\bf M}_S$
to~${\bf N}_{S'}$, where
\[
S'=\{s_1',\dots,s_n'\},\quad s_1'-s_1=\cdots=s_n'-s_n.
\]
\end{remark}

\section{LCM--GCD setting on $T=\{1,2,\dots,n\}$}
\label{lcm}

\subsection{The case $n\le 4$}
\label{nle4}

Let $T=\{1,2,\dots,n\}$, let $\lambda_{n1}\ge\cdots\ge\lambda_{nn}$
be the g-eigenvalues of ${\bf L}_T$ to~${\bf G}_T$, and let
$p_n(\lambda)=\det{({\bf L}_T-\lambda{\bf G}_T)}$
be the g-characteristic polynomial. Then
\allowdisplaybreaks
\begin{align*}
p_1(\lambda)&=1-\lambda,\quad\lambda_{11}= 1,
\\
p_2(\lambda)&=
\left|
\begin{array}{cc}
1-\lambda&2-\lambda
\\
2-\lambda&2-2\lambda
\end{array}
\right|
=\lambda^2-2,\quad\lambda_{21}=\sqrt{2}, \ \lambda_{22}=-\sqrt{2},
\\
p_3(\lambda)&=
\left|
\begin{array}{ccc}
1-\lambda & 2-\lambda & 3-\lambda\\
2-\lambda & 2-2\lambda & 6-\lambda\\
3-\lambda & 6-\lambda & 3-3\lambda
\end{array}
\right|
=-2(\lambda+1)(\lambda^2-6),
\quad\lambda_{31}=\sqrt{6}, \ \lambda_{32}=-1, \ \lambda_{33}=-\sqrt{6},
\\
p_4(\lambda)&=
\left|
\begin{array}{cccc}
1-\lambda & 2-\lambda & 3-\lambda & 4-\lambda \\
2-\lambda & 2-2\lambda & 6-\lambda & 4-2\lambda \\
3-\lambda & 6-\lambda & 3-3\lambda & 12-\lambda\\
4-\lambda & 4-2\lambda & 12-\lambda & 4-4\lambda
\end{array}
\right|
=4(\lambda + 1)^2(\lambda^2 - 12),
\\
&\quad\lambda_{41}= \sqrt{12}, \ \lambda_{42}=\lambda_{43}=-1, \ \lambda_{44}=-\sqrt{12}.
\end{align*}

\subsection{The case $n>4$}

The g-eigenvalues in Section \ref{nle4} suggest that there may also be values of $n>4$ such that
\begin{equation}
\label{expect}
\lambda_{n1}=\sqrt{m}, \ \lambda_{n2}=\cdots=\lambda_{n,n-1}=-1, \ \lambda_{nn}=-\sqrt{m}
\end{equation}
for some integer $m$.
We examine this hypothesis and begin with the case  $n=5$. 
We have
\begin{align*}
p_5(\lambda)&=-16\lambda^5-48\lambda^4+528\lambda^3+2480\lambda^2+2880\lambda+960
\\
&=-16(\lambda+1)(\lambda^4+2\lambda^3-35\lambda^2-120\lambda-60) \\
&=:
-16(\lambda+1)q(\lambda).
\end{align*}
Because $q(-1)=24\ne 0$, the multiplicity of $\lambda=-1$ is only one, falsifying~(\ref{expect}).
The g-eigenvalues are
\[
\lambda_{51}=6.4798, \ \lambda_{52}=-0.6118, \ \lambda_{53}=-1, \ \lambda_{54}=-3.3489, \
\lambda_{55}=-4.5191.
\]
(These are approximations to four decimal places, similarly throughout the paper.)
Interestingly, $\sqrt{42}=6.4807$ is near to~$\lambda_{51}$.
If their difference were due to rounding errors, then the first equation in~(\ref{expect}) would hold
for $n=5$, too. But $p_5(\sqrt{42})=-3168\sqrt{42}+20448$, showing that the difference is actual.

Moreover,
\[
\lambda_{61}=6.8501, \ \lambda_{62}=2.5592, \ \lambda_{63}=-0.7419, \ \lambda_{64}=-1.3749, \
\lambda_{65}=-3.4396, \ \lambda_{66}=-5.8528.
\]
Thus $-1$ is not a g-eigenvalue when $n=6$.  

These two cases already 
suffice to make it fairly clear that
$\lambda_{n2}=\cdots=\lambda_{n,n-1}=-1$ does not hold for any $n>4$. However, we choose
to verify this claim thoroughly, as it involves first proving and then applying Cauchy's interlacing
theorem \cite[Theorem 4.3.17]{HJ} for g-eigenvalues -- a result that is arguably of general interest.
To that end, recall that there are two positive
g-eigenvalues for $n=6$. By Theorem~\ref{cauchy} below, there are at least two positive
g-eigenvalues for $n=7$. Continuing in this way confirms the claim.

We conclude this section by exploring in which dimensions $-1$ occurs as a
g-eigenvalue. Computer experiments covering the range $1\le n\le 1000$ (with code provided in the Appendix) show that $-1$ is a g-eigenvalue if and only if
\[
n = \overbrace{4, 5}^{2}, \overbrace{8, 9, 10, 11}^{2^2}, 
\overbrace{16,\ldots, 23}^{2^3}, \overbrace{32,\ldots, 47}^{2^4}, \overbrace{64,\ldots, 95}^{2^5}, \overbrace{128,\ldots, 191}^{2^6}, \overbrace{256,\ldots, 383}^{2^7}, 512,\ldots,
\]
where the overbrace indicates the number of terms. This sequence is the same as the
OEIS~\cite{oeis} sequence \seqnum{A004754} without the first term. Its description~\cite{oeis} raises an interesting conjecture.

\begin{conjecture}
\label{conj}
Let $T=\{1,\dots,n\}$, $n>3$.
Then $-1$ is a g-eigenvalue of ${\bf L}_T$ to~${\bf G}_T$ if and only
if the binary representation of $n$ begins with $10$.
\end{conjecture}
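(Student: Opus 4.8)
The plan is to reduce Conjecture~\ref{conj} to a singularity question, settle the ``if'' direction by an explicit null vector, and attack the ``only if'' direction with Cauchy's interlacing theorem. First I would record the elementary equivalence: since ${\bf G}_T$ is positive definite, $-1$ is a g-eigenvalue of ${\bf L}_T$ to ${\bf G}_T$ if and only if the symmetric matrix ${\bf M}_n:={\bf L}_T+{\bf G}_T$, with entries $\mathrm{lcm}(i,j)+\gcd(i,j)$, is singular. I would also restate the target arithmetically: if $2^k$ denotes the largest power of $2$ with $2^k\le n$, then the binary expansion of $n$ begins with $10$ exactly when $2^k\le n<3\cdot2^{k-1}$, equivalently when the integer $3\cdot2^{k-1}$ has not yet entered $T=\{1,\dots,n\}$. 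Thus the two directions become: (i) if $n<3\cdot2^{k-1}$ then ${\bf M}_n$ is singular; (ii) if $3\cdot2^{k-1}\le n<2^{k+1}$ then ${\bf M}_n$ is nonsingular.

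For direction (i) I would exhibit the explicit vector
\[
{\bf v}=2\,{\bf e}_{2^{k-2}}-3\,{\bf e}_{2^{k-1}}+{\bf e}_{2^{k}},
\]
supported on the three consecutive powers of two below $n$ (this requires $k\ge2$, i.e. $n\ge4$, which is precisely why $n=2$, though it begins with $10$, is excluded). Writing a row index as $i=2^a m$ with $m$ odd and using $\mathrm{lcm}(i,2^{s})=2^{\max(a,s)}m$ and $\gcd(i,2^{s})=2^{\min(a,s)}$, a short computation shows that the $i$-th entry of ${\bf M}_n{\bf v}$ equals $2^{k-1}(m-1)$ when $2^{k-1}$ exactly divides $i$ and equals $0$ in every other case. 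Hence this entry is nonzero only for $i=2^{k-1}m$ with odd $m\ge3$, the smallest such $i$ being $3\cdot2^{k-1}$; consequently ${\bf M}_n{\bf v}={\bf 0}$ exactly when $n<3\cdot2^{k-1}$, which proves (i). The vector at level $k$ is the index-doubling $j\mapsto 2j$ of the one at level $k-1$, and this self-similarity is the structural reason the answer set is \seqnum{A004754}.

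For direction (ii) I would argue that once $3\cdot2^{k-1}$ enters $T$ the matrix becomes nonsingular and stays so until the next power of two $2^{k+1}$ appears. The tools are Cauchy's interlacing theorem for g-eigenvalues (Theorem~\ref{cauchy}), which allows the multiplicity of $-1$ to change by at most one as $n$ increases by one, together with the number-theoretic factorization of ${\bf M}_n$: from $\mathrm{lcm}(i,j)\gcd(i,j)=ij$ and the incidence/Möbius identities $\gcd(i,j)=\sum_{d\mid i,\,d\mid j}\phi(d)$, $1/\gcd(i,j)=\sum_{d\mid i,\,d\mid j}b(d)$ with $b=\mu*(1/\mathrm{id})$ and $b(p^{a})=(1-p)/p^{a}$, one obtains ${\bf G}_T={\bf Z}{\bf D}_\phi{\bf Z}^{T}$ and ${\bf L}_T=(\boldsymbol\Delta{\bf Z}){\bf D}_b(\boldsymbol\Delta{\bf Z})^{T}$, where $Z_{ij}=1$ if $j\mid i$ and $0$ otherwise and $\boldsymbol\Delta=\diag(1,\dots,n)$. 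Conjugating by the unimodular ${\bf Z}$ gives $\det{\bf M}_n=\det({\bf P}{\bf D}_b{\bf P}^{T}+{\bf D}_\phi)$ with ${\bf P}={\bf Z}^{-1}\boldsymbol\Delta{\bf Z}$, which I would try to show does not vanish throughout the band $3\cdot2^{k-1}\le n<2^{k+1}$.

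The main obstacle is direction (ii). Exhibiting one null vector is easy, but \emph{ruling out all} of them is not: the nullity of ${\bf M}_n$ is not monotone (it is $1,2,1,0$ for $n=3,4,5,6$), because besides the dilation vector ${\bf v}$ there are sporadic inherited null vectors -- for instance $(-5,2,1)$ spans the kernel at $n=3$ and its zero-extension still lies in the kernel at $n=4$ -- whose lifetimes do not respect the powers-of-two boundaries in any obvious way. Interlacing controls only how the count of g-eigenvalues equal to, above, or below $-1$ may change, not its absolute value, so converting it into genuine nonsingularity across the whole begins-with-$11$ band (and, beyond the conjecture, accounting for the full multiplicity of $-1$) is where I expect essentially all of the difficulty to lie.
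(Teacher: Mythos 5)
First, note that the paper itself does not prove this statement: it is left as Conjecture~\ref{conj}, supported only by the numerical experiments in the Appendix (up to $n=1000$) and the remark that formula~(\ref{formula}) might support an induction. So there is no proof in the paper to compare against, and your attempt must stand on its own. Half of it does. The reduction to singularity of ${\bf M}_n={\bf L}_T+{\bf G}_T$ is valid, the restatement of ``the binary expansion of $n$ begins with $10$'' as $2^k\le n<3\cdot 2^{k-1}$ is correct, and your ``if'' direction is a complete and correct proof that goes beyond what the paper establishes: with ${\bf v}=2\,{\bf e}_{2^{k-2}}-3\,{\bf e}_{2^{k-1}}+{\bf e}_{2^k}$ and $i=2^am$, $m$ odd, the $i$th entry of ${\bf M}_n{\bf v}$ is indeed $0$ for $a\le k-2$ and for $a\ge k$, and equals $2^{k-1}(m-1)$ for $a=k-1$, so the first nonzero row is $i=3\cdot 2^{k-1}$ and ${\bf M}_n{\bf v}={\bf 0}$ precisely when $n<3\cdot 2^{k-1}$. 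I checked this both symbolically and on $n=4$. This settles ``begins with $10$ $\Rightarrow$ $-1$ is a g-eigenvalue'' for all $n\ge 4$.

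The genuine gap is the ``only if'' direction, as you yourself acknowledge. Nothing in the proposal shows that ${\bf M}_n$ is nonsingular for $3\cdot 2^{k-1}\le n<2^{k+1}$. Cauchy interlacing (Theorem~\ref{cauchy}) only bounds how the counts of g-eigenvalues above and below $-1$ can change when a row and column are appended; it can never certify that $-1$ is \emph{absent} at a given $n$, since a g-eigenvalue may pass through $-1$ at any step --- which is exactly what happens at $n=4$ and again at each power of two. The factorization $\det{\bf M}_n=\det({\bf P}{\bf D}_b{\bf P}^{T}+{\bf D}_\phi)$ is correct algebra but is only a reformulation: no property of ${\bf P}{\bf D}_b{\bf P}^{T}+{\bf D}_\phi$ is identified that would force nonvanishing on the stated band. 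Moreover, your own observation that $(-5,2,1,0)$ is a kernel vector of ${\bf M}_4$ not of the dilation form ${\bf v}$ shows that even a full analysis of when ${\bf v}$ survives says nothing about other null vectors appearing. As it stands you have proved one implication of the conjecture; the converse, and hence the conjecture, remains open.
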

\noindent
For example, $4=(100)_2$, $5=(101)_2$, $8=(1000)_2$, $19=(10011)_2$.

This OEIS sequence~$(a_n)$ satisfies~\cite{oeis}
\begin{equation}
\label{formula}
a_{2^m+k}=2^{m+1}+k,\quad m\ge 0, \ 0\le k<2^m.
\end{equation}
If, for example, $m=k=3$, then the left-hand side equals $a_{8+3}=a_{11}=19$, and
the right-hand side equals $16+3=19$.
An induction proof of Conjecture~\ref{conj} can perhaps be found by using~(\ref{formula}).

\subsection{Cauchy's interlacing theorem for g-eigenvalues}

\begin{theorem}
\label{cauchy}
Let $\bf A$ and $\bf B$ be as in~$(\ref{gevp})$, $n>1$, with first leading principal submatrices ${\bf A}'$
and respectively~${\bf B}'$ $($obtained by removing the $n$th row and column$)$.
Let
\[
\lambda_1\ge\cdots\ge\lambda_n\quad{\it and}\quad\lambda_1'\ge\cdots\ge\lambda_{n-1}'
\]
be the g-eigenvalues of $\bf A$ to~$\bf B$ and, respectively, of ${\bf A}'$ to~${\bf B}'$.
Then
\[
\lambda_1\ge\lambda_1'\ge\lambda_2\ge\lambda_2'\ge\cdots\ge\lambda_{n-1}\ge\lambda_{n-1}'\ge\lambda_n.
\]
\end{theorem}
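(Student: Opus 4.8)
The plan is to reduce the generalized problem to an ordinary Hermitian eigenvalue problem and then invoke the classical Cauchy interlacing theorem \cite[Theorem 4.3.17]{HJ}. Since ${\bf B}$ is Hermitian positive definite, it admits a Cholesky factorization ${\bf B}={\bf R}^*{\bf R}$ with ${\bf R}$ upper triangular and invertible. Writing the g-eigenvalue equation ${\bf Ax}=\lambda{\bf Bx}$ as ${\bf Ax}=\lambda{\bf R}^*{\bf Rx}$ and substituting ${\bf y}={\bf Rx}$ gives ${\bf Cy}=\lambda{\bf y}$, where ${\bf C}={\bf R}^{-*}{\bf A}{\bf R}^{-1}$ is Hermitian because ${\bf A}$ is. The substitution is a bijection between nonzero vectors, so $\lambda_1\ge\cdots\ge\lambda_n$ are precisely the (real) s-eigenvalues of ${\bf C}$.

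The key step is to show that passing to the first leading principal submatrix is compatible with this reduction. Partition ${\bf R}$ as an upper triangular block matrix whose top-left $(n-1)\times(n-1)$ corner is ${\bf R}'$. Comparing the top-left blocks in ${\bf B}={\bf R}^*{\bf R}$ shows that ${\bf B}'=({\bf R}')^*{\bf R}'$, so ${\bf R}'$ is the Cholesky factor of ${\bf B}'$. Since the inverse of an upper triangular matrix is again upper triangular, ${\bf S}:={\bf R}^{-1}$ is upper triangular with top-left corner $({\bf R}')^{-1}$, and writing ${\bf C}={\bf S}^*{\bf A}{\bf S}$ one checks by a single block multiplication that the top-left $(n-1)\times(n-1)$ block of ${\bf C}$ equals $({\bf R}')^{-*}{\bf A}'({\bf R}')^{-1}$; the cross terms drop out precisely because ${\bf S}$ is upper triangular. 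Hence the first leading principal submatrix ${\bf C}'$ of ${\bf C}$ is exactly the matrix whose s-eigenvalues are the g-eigenvalues $\lambda_1'\ge\cdots\ge\lambda_{n-1}'$ of ${\bf A}'$ to ${\bf B}'$.

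With ${\bf C}$ Hermitian and ${\bf C}'$ its leading principal submatrix of order $n-1$, the classical Cauchy interlacing theorem \cite[Theorem 4.3.17]{HJ} yields $\lambda_1\ge\lambda_1'\ge\lambda_2\ge\cdots\ge\lambda_{n-1}'\ge\lambda_n$, which is the asserted chain of inequalities. I expect the main obstacle to be the key step: verifying that the Cholesky factorization commutes with taking leading principal submatrices, i.e., that ${\bf C}'$ really is the leading principal submatrix of ${\bf C}$. Everything hinges on the triangular structure of ${\bf R}$, and hence of ${\bf R}^{-1}$; had one instead used the Hermitian square root ${\bf B}^{1/2}$, no such nesting would hold and the reduction would fail. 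An alternative route avoiding Cholesky altogether is to apply a Courant--Fischer min--max characterization to the generalized Rayleigh quotient ${\bf x}^*{\bf Ax}/{\bf x}^*{\bf Bx}$, noting that restricting to the $(n-1)$-dimensional subspace of vectors with vanishing last coordinate reproduces the quotient for ${\bf A}'$ to ${\bf B}'$; this gives the interlacing directly but first requires establishing the min--max formula in the generalized setting.
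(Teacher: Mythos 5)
Your proof is correct, but it follows a genuinely different route from the paper. The paper's proof works directly with the generalized Rayleigh quotient ${\bf x}^*{\bf Ax}/{\bf x}^*{\bf Bx}$: it invokes the generalized Courant--Fischer min--max theorem (citing Avron, Ng, and Toledo), observes that restricting to vectors with $x_n=0$ reproduces the quotient for ${\bf A}'$ to ${\bf B}'$, and obtains $\lambda_k'\le\lambda_k$ from the max--min form and $\lambda_k'\ge\lambda_{k+1}$ from the min--max form --- precisely the ``alternative route'' you sketch in your closing sentence. Your main argument instead reduces everything to the classical Hermitian case via the Cholesky factorization ${\bf B}={\bf R}^*{\bf R}$, and the key step checks out: since ${\bf R}$ is upper triangular, the leading principal blocks satisfy ${\bf B}'=({\bf R}')^*{\bf R}'$, the inverse ${\bf R}^{-1}$ is again upper triangular with leading block $({\bf R}')^{-1}$, and a block multiplication confirms that the leading principal submatrix of ${\bf C}={\bf R}^{-*}{\bf A}{\bf R}^{-1}$ is exactly $({\bf R}')^{-*}{\bf A}'({\bf R}')^{-1}$, whose s-eigenvalues are the $\lambda_k'$. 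Classical Cauchy interlacing for ${\bf C}$ and ${\bf C}'$ then finishes the proof. What each approach buys: yours is self-contained modulo the textbook interlacing theorem and avoids having to establish (or cite) the generalized Courant--Fischer theorem, at the cost of the triangularity bookkeeping you rightly flag (the Hermitian square root ${\bf B}^{1/2}$ would destroy the nesting); the paper's variational argument is the natural generalization of the standard proof and makes the min--max characterization~(\ref{maxmin}) available as a tool in its own right, which the authors present as being of independent interest.
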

\begin{proof}
Let $\preceq$ denote the subspace inclusion. Because the Courant--Fischer theorem
\cite[Theorem 4.2.6]{HJ} extends to g-eigenvalues \cite[Theorem~3]{ANT} 
(note the wrong ordering of max and min in its formulation), we have
\begin{equation}
\label{maxmin}
\lambda_k=\max_{\substack{U\preceq\mathbb{C}^n\\\dim{U}=k}}\min_{{\bf 0}\ne{\bf x}\in U}
\frac{{\bf x}^*{\bf Ax}}{{\bf x}^*{\bf Bx}},\quad k=1,\dots,n,
\end{equation}
and
\begin{equation}
\label{mxmn}
\lambda_k'=\max_{\substack{V\preceq\mathbb{C}^{n-1}\\\dim{V}=k}}\min_{{\bf 0}\ne{\bf y}\in V}
\frac{{\bf y}^*{\bf A}'{\bf y}}{{\bf y}^*{\bf B}'{\bf y}},\quad k=1,\dots,n-1.
\end{equation}
Let
\[
{\bf 0}\ne{\bf x}\in\mathbb{C}^n,\quad
{\bf x}=\left(\begin{array}{c}
{\bf x}'\\x_n\end{array}\right),\quad
{\bf A}=\left(
\begin{array}{cc}
{\bf A}'&{\bf u}
\\
{\bf u}^*&a_{nn}
\end{array}
\right),\quad
{\bf B}=\left(
\begin{array}{cc}
{\bf B}'&{\bf v}
\\
{\bf v}^*&b_{nn}
\end{array}
\right).
\]
If $x_n=0$, then
\[
\frac{{\bf x}^*{\bf Ax}}{{\bf x}^*{\bf Bx}}=\frac{({\bf x}')^*{\bf A}'{\bf x}'}{({\bf x}')^*{\bf B}'{\bf x}'},
\]
so, for $k=1,\dots,n-1$, 
\[
\lambda_k'=\max_{\substack{V\preceq\mathbb{C}^{n-1}\\\dim{V}=k}}
\min_{\substack{{\bf 0}\ne{\bf x}'\in V\\x_n=0}}
\frac{{\bf x}^*{\bf Ax}}{{\bf x}^*{\bf Bx}}=:M.
\]
%Fix $V\preceq\mathbb{C}^{n-1}$, $\dim{V}=k$. 
Since
\[
\Big\{\min_{\substack{{\bf 0}\ne{\bf x}'\in V\\x_n=0}}
\frac{{\bf x}^*{\bf Ax}}{{\bf x}^*{\bf Bx}}\colon 
V\preceq\mathbb{C}^{n-1}, \dim{V}=k\Big\}
\subseteq
\Big\{\min_{{\bf 0}\ne{\bf x}\in U}\frac{{\bf x}^*{\bf Ax}}{{\bf x}^*{\bf Bx}} \colon 
U\preceq\mathbb{C}^n, \dim{U}=k\Big\},         
\]
it follows that
\begin{equation}
\label{m}
M\le\max_{\substack{U\preceq\mathbb{C}^n\\\dim{U}=k}}\min_{{\bf 0}\ne{\bf x}\in U}
\frac{{\bf x}^*{\bf Ax}}{{\bf x}^*{\bf Bx}}.
\end{equation}
Now, by (\ref{mxmn}), (\ref{m}), and~(\ref{maxmin}),
\[
\lambda_k'\le\lambda_k.
\]

To find a reverse inequality, we change the ordering of max and min in the generalized Courant--Fischer theorem:
\[
\lambda_{k+1}=\min_{\substack{U\preceq\mathbb{C}^n\\\dim{U}=n-k}}\max_{{\bf 0}\ne{\bf x}\in U}
\frac{{\bf x}^*{\bf Ax}}{{\bf x}^*{\bf Bx}},\quad k=0,\dots,n-1,
\]
and
\[
\lambda_k'=\min_{\substack{V\preceq\mathbb{C}^{n-1}\\\dim{V}=n-k}}\max_{{\bf 0}\ne{\bf y}\in V}
\frac{{\bf y}^*{\bf A}'{\bf y}}{{\bf y}^*{\bf B}'{\bf y}},\quad k=1,\dots,n-1.
\]
By a simple modification of the previous argument, we obtain
\[
\lambda_k'\ge\lambda_{k+1},
\]
completing the proof.
\end{proof}

\section{LCM--GCD setting on some $T\ne\{1,2,\dots,n\}$}
\label{lcmt}

\subsection{The cases $n=2,3$}

First, let $T=\{u,v\}$, $0<u<v$. Studying $\{u/d,v/d\}$ if $d=\gcd{(u,v)}>1$, we can assume that
$\gcd{(u,v)}=1$. Then
\[
\det{({\bf L}_T-\lambda{\bf G}_T)}=
\left|
\begin{array}{cc}
u-\lambda u&uv-\lambda
\\
uv-\lambda&v-\lambda v
\end{array}
\right|=(uv-1)(\lambda^2-uv),\quad\lambda_1=\sqrt{uv}, \ \lambda_2=-\sqrt{uv}.
\]
Next, let $T=\{1,u,v\}$, where $1<u<v$ and $\gcd{(u,v)}=1$. Then
\begin{align*}
&\det{({\bf L}_T-\lambda{\bf G}_T)}=
\left|
\begin{array}{ccc}
1-\lambda&u-\lambda&v-\lambda
\\
u-\lambda&u-\lambda u&uv-\lambda
\\
v-\lambda&uv-\lambda&v-\lambda v
\end{array}
\right|
\\
&=(u+v-uv-1)\lambda^3+(u+v-uv-1)\lambda^2+(u^2v^2+uv-u^2v-uv^2)\lambda+u^2v^2-u^2v
\\
&-uv^2+uv
=(u-1)(v-1)(\lambda+1)(\lambda^2-uv),\quad
\lambda_1=\sqrt{uv}, \
\lambda_2=-1, \ \lambda_3=-\sqrt{uv}.
\end{align*}
More generally, let $T=\{u,v,w\}$, where $1<u<v<w$ and $\gcd{(u,v)}=\gcd{(u,w)}=\gcd{(v,w)}=1$.
It seems that we do not get pretty results. If, for example, $T=\{2,3,5\}$, then

\begin{equation*}
\det{({\bf L}_T-\lambda{\bf G}_T)}=
\left|
\begin{array}{ccc}
2-2\lambda&6-\lambda&10-\lambda
\\
6-\lambda&3-3\lambda&15-\lambda
\\
10-\lambda&15-\lambda&5-5\lambda
\end{array}
\right|=-22\lambda^3-38\lambda^2+420\lambda+900,
\end{equation*}

\begin{equation*}
    \lambda_1=4.5128, \ \lambda_2=-2.3027, \ \lambda_3=-3.9371.
\end{equation*}

\subsection{The case $T=\{1,p,\dots,p^{n-1}\}$, $p\in\mathbb{P}$}

In this case,
\[
{\bf L}_T=
\left(
\begin{array}{ccccc}
1&p&p^2&\cdots&p^{n-1}
\\
p&p&p^2&\cdots&p^{n-1}
\\
p^2&p^2&p^2&\cdots&p^{n-1}
\\
\vdots&\vdots&\vdots&\vdots&\vdots
\\
p^{n-1}&p^{n-1}&p^{n-1}&\cdots&p^{n-1}
\end{array}
\right)={\bf M}_T,\quad
{\bf G}_T=
\left(
\begin{array}{ccccc}
1&1&1&\cdots&1
\\
1&p&p&\cdots&p
\\
1&p&p^2&\cdots&p^2
\\
\vdots&\vdots&\vdots&\vdots&\vdots
\\
1&p&p^2&\cdots&p^{n-1}
\end{array}
\right)={\bf N}_T.
\]
By Theorem~\ref{mnthm}, the g-eigenvalues of ${\bf L}_T$ to~${\bf G}_T$ are
\[
\lambda_1=p^\frac{n-1}{2}, \ \lambda_2=\cdots=\lambda_{n-1}=-1, \ \lambda_n=-p^\frac{n-1}{2}.
\]

\subsection{Reordering does not matter}

We noted in Remark~\ref{remark} that reordering~$S$ in the MAX--MIN setting only changes (\ref{lambdas})
to~(\ref{lmbds}), so all g-eigenvalues remain.  We now show that reordering~$T$ in the
LCM--GCD setting also keeps the g-eigenvalues. More generally, let ${\bf X}=(x_{ij})$ be a complex square matrix of order~$n$. Given a permutation~$\sigma$ of $(1,\dots,n)$, define
\[
{\bf X}_\sigma=(x^\sigma_{ij}),\quad x^\sigma_{ij}=x_{\sigma(i),\sigma(j)},
\]
and let ${\bf P}_\sigma$ denote the permutation matrix corresponding to~$\sigma$. Since
\[
{\bf X}_\sigma={\bf P}_\sigma{\bf XP}_\sigma\quad{\rm and}\quad\det{{\bf P}_\sigma}=\pm 1,
\]
we have
\[
\det{{\bf X}_\sigma}=\det{{\bf X}},
\]
implying the claim.

\section{Discussion}
\label{discussion}

Above, we first examined the g-eigenvalues of MAX matrices to MIN matrices. The results reveal distinct
structural patterns: the g-eigenvalues are fully characterized and take the form of
one positive value, several values~$-1$ (with multiplicity zero when $n=2$), and one negative value.
This regularity highlights an underlying symmetry and robustness in the generalized eigenstructure of these matrices.

We then turned to the g-eigenvalues of LCM matrices to GCD matrices on
$T=\{1,\dots,n\}$. This topic is 
more intricate. While the above pattern holds for $n\le 4$, it breaks down for $n>4$. In the course of verifying this, we proved a generalization of Cauchy's interlacing theorem for eigenvalues -- namely, the corresponding theorem for g-eigenvalues.

A surprising observation is the emergence of connection to OEIS sequence \seqnum{A004754} in Conjecture~\ref{conj}. If proven, this would provide a novel bridge between matrix theory and number theory, offering a new insight into exploring spectral properties of matrices through binary representations of integers.

We concluded our study by considering sets $T \ne \{1, \dots, n\}$ to demonstrate that certain configurations in the LCM--GCD setting exhibit a MAX--MIN structure. We also showed that, in general, reordering $S$ and~$T$ does not affect the g-eigenvalues, thereby reinforcing the robustness of these matrices under permutations. 

From a computational perspective, determining generalized eigenvalues poses significant challenges, as it typically requires finding the roots of high-degree characteristic polynomials. As $n$ increases, these polynomials become difficult to construct and numerically unstable to solve. However, in constructing the sequence in Conjecture~\ref{conj}, these difficulties can largely be avoided: it is not necessary to form or factorize $p_n(\lambda) = \det(\mathbf{L}_T - \lambda \mathbf{G}_T)$ explicitly. Instead, one can directly compute $p_n(-1) = \det(\mathbf{L}_T + \mathbf{G}_T)$. This approach is computationally lighter, numerically more stable, and sufficient to verify whether $-1$ is a g-eigenvalue.

\bigskip
\hrule
\bigskip

\noindent 2020 {\it Mathematics Subject Classification}:
Primary 15A18; Secondary 11C20.

\noindent \emph{Keywords: }
generalized eigenvalue, Cauchy's interlacing theorem, Hermitian matrix, GCD and LCM matrices, MIN  and MAX matrices.
\bigskip
\hrule
\bigskip

\noindent (Concerned with sequences
\seqnum{A001088}, \seqnum{A003983}, \seqnum{A004754}, 
\seqnum{A051125}, and  \seqnum{A060238}.)

\bigskip
\hrule
\bigskip

\vspace*{+.1in}
\noindent
Received May xx 2025;
revised version received 
%March 2 2025; March 11 2025.
Published in {\it Journal of Integer Sequences}, %March 26 2025.

\bigskip
\hrule
\bigskip

\noindent
Return to
{Journal of Integer Sequences home page}{https://cs.uwaterloo.ca/journals/JIS/}.
\vskip .1in

\newpage

\begin{appendix}
\setcounter{secnumdepth}{0}
\begin{flushleft}  
    \section{Appendix}
\end{flushleft} 

\begin{lstlisting}[caption=Python code to examine Conjecture \ref{conj}, label=lst:code]
import numpy as np
import math
from scipy.linalg import eig

def gcd_matrix(n):
    """Construct an n x n matrix with entries gcd(i, j)."""
    M = np.zeros((n, n), dtype=int)
    for i in range(1, n+1):
        for j in range(1, n+1):
            M[i-1, j-1] = math.gcd(i, j)
    return M

def lcm_matrix(n):
    """Construct an n x n matrix with entries lcm(i, j)."""
    M = np.zeros((n, n), dtype=int)
    for i in range(1, n+1):
        for j in range(1, n+1):
            M[i-1, j-1] = math.lcm(i, j)
    return M

def find_n_with_minus_one(tol=1e-5, max_n=1000):
    """
    For n = 1 to max_n, computes the generalized eigenvalues for Ax = lambda Bx, where A is the LCM matrix and B is the GCD matrix. Returns a list of n for which -1 appears as a generalized eigenvalue (within a tolerance tol).
    """
    n_list = []
    for n in range(1, max_n+1):
        A = lcm_matrix(n).astype(float)
        B = gcd_matrix(n).astype(float)
        # Compute g-eigenvalues of A to B:
        eigenvalues, _ = eig(A, B)
        # Convert real g-eigenvalues to real values:
        eigenvalues = np.real_if_close(eigenvalues, tol=tol)
        # Check if any g-eigenvalue is equal to -1:
        if any(np.isclose(ev, -1, atol=tol) for ev in eigenvalues):
            n_list.append(n)
    return n_list

if __name__ == '__main__':
    result = find_n_with_minus_one(tol=1e-5, max_n=1000)
    print("Dimensions n for which -1 appears as a g-eigenvalue:")
    print(result)
\end{lstlisting}

\end{appendix}

\end{document}